\definecolor{blun}{cmyk}{0.8, 0.5, 0, 0.7}
\let\oldbibliography\thebibliography
\renewcommand{\thebibliography}[1]{%
  \oldbibliography{#1}%
  \setlength{\itemsep}{-1.2mm}%
}
\theoremstyle{plain}
\newtheorem{thm}{Theorem}[section]
\theoremstyle{definition}
\newtheoremstyle{myremark}
  {3pt}
  {3pt}
  {\small \rmfamily}
  {5pt}
  {\rmfamily}
  {:}
  {.5em}
  {}
\theoremstyle{myremark}
\def\txte{{\textnormal{e}}}
\def\tr{\textnormal{tr}}
\newcommand{\be}{\begin{equation}}
\newcommand{\ee}{\end{equation}}
\newcommand{\benn}{\begin{equation*}}
\newcommand{\eenn}{\end{equation*}}
\newcommand{\bea}{\begin{eqnarray}}
\newcommand{\eea}{\end{eqnarray}}
\newcommand{\beann}{\begin{eqnarray*}}
\newcommand{\eeann}{\end{eqnarray*}}
\newcommand{\myendex}{$\blacklozenge$\end{ex}}
\newcommand{\myendexerc}{$\lozenge$\end{exerc}}
\newcommand{\myendpexerc}{$\lozenge$\end{pexerc}}
\begin{document}

\author{
Christian Kuehn~\thanks{Department of Mathematics, Technical University of Munich, 85748 Garching b.~M\"unchen, Germany}
\thanks{Munich Data Science Institute, Technical University of Munich, 85748 Garching b.~M\"unchen, Germany}
\thanks{Complexity Science Hub Vienna, 1080 Vienna, Austria}~~and~
Cinzia Soresina \thanks{Department of Mathematics and Scientific Computing, University of Graz, 8010 Graz, Austria,\\ emails: \href{mailto: ckuehn@ma.tum.de}{ckuehn@ma.tum.de}, \href{mailto: cinzia.soresina@uni-graz.at}{cinzia.soresina@uni-graz.at}}}

\title{Cross-diffusion induced instability on networks}

\maketitle

\begin{abstract} \noindent
The concept of Turing instability,  namely that diffusion can destabilize the uniform steady state, is well known either in the context of partial differential equations (PDEs) or in networks of dynamical systems. Recently reaction--diffusion equations with cross-diffusion terms have been investigated, showing an analogous effect called cross-diffusion induced instability. In this paper, we extend this concept to networks of dynamical systems, showing that the spectrum of the graph Laplacian determines the instability appearance, as well as the spectrum of the Laplace operator in reaction--diffusion equations. We extend to network dynamics a particular network model for competing species, coming from the PDEs context. In particular, the influence of different topology structures on the cross-diffusion induced instability is highlighted, considering regular rings and lattices,  and also small-world, Erd\H{o}s--R\'eyni, and Barab\'asi--Albert networks.

\medskip
\noindent
\textbf{Keywords:} Turing instability, cross-diffusion, dynamical networks, graph Laplacian, SKT model.
\end{abstract}


\section{Introduction}\label{sec:intro}
A large number of real-life phenomena, for example in chemistry, ecology, and biology, give rise to a rich variety of complex behaviors, including pattern formation. The spirals that originate from chemical reactions, fish skin and animal coat patterning, and spatial vegetation patterns result from a spontaneous drive for self-organization into regular structures, both in time and space.
Mathematical principles that can drive the process of pattern formation have been established by Alan Turing in 1952. In his seminal paper on the theory of morphogenesis \cite{turing1952chemical}, he discovered that patterns can arise as a result of the dynamical interplay between reaction and diffusion. This theory, known as Turing instability, provides a general and elegant explanation for the variety of patterns appearing in living systems: diffusion perturbs and destabilizes a homogeneous stable equilibrium, yielding to spatially inhomogeneous steady states. 

On the other hand, Turing instability can also occur in networks of dynamical systems. In the seminal paper by Othmer and Scriven \cite{othmer1971instability}, a general mathematical framework for the analysis of instabilities in networks has been proposed and applied to regular lattices or small networks. Later it has been further explored and extended to more complex networks \cite{asllani2014theory,nakao2010turing}, and even later to multiplex~\cite{asllani2014turing,kouvaris2015pattern,brechtel2018master}, time-varying \cite{petit2017theory} and stochastic \cite{asslani2012stochastic} networks. Recently, Turing bifurcations have been investigated on one-dimensional random ring networks where the probability of a
connection between two nodes depends on the distance between them by using graphons \cite{bramburger2023pattern}. The surprising finding is that the conditions leading to Turing instability are the same for both, the reaction--diffusion and the network systems, but while in the continuum case, we look at the eigenvalues of the Laplace operator, in the network case we need the spectrum of the graph Laplacian. 

The graph Laplacian of an undirected network with $N$ nodes is a real, symmetric and positive semi-definite matrix, whose elements are given by
\begin{equation}\label{eq:L}
l_{ij}= k_i\delta_{ij}-a_{ij},
\end{equation}
where $a_{ij}$ are the elements of the adjacency matrix and $k_i$ is the degree of the node $i$, defined as
$$k_i=\sum_{i=1}^N{a_{ij}},\quad a_{ij}=\begin{cases}1,&i\leftrightarrow j,\\ 0&i\nleftrightarrow j, \end{cases} \quad i,j=1,\ldots,N.$$
In many areas and applications, it turns out that the eigenvalues of the graph Laplacian are useful tools, and for this reason, they have been intensively studied ~\cite{anderson1985eigenvalues, chung1997spectral, jost2022spectral,  li1998laplacian, merris1998note, mohar1991laplacian}. The eigenvalues $\Lambda_\alpha$ and eigenvectors $\underline{v}^{(\alpha)}=(v_1^{(\alpha)},\dots,v_N^{(\alpha)})^\top$ of the graph Laplacian $L$ are determined by 
$$\sum_{\alpha=1}^N{l_{ij}v_j^{(\alpha)}}=\Lambda_\alpha v_j^{(\alpha)}.$$
For an undirected network,  eigenvalues are real and nonnegative. It can be easily proven that the $0$ is an eigenvalue, $0= \lambda_1\leq \lambda_2\leq \dots \leq \lambda_N\leq N$, and that $\lambda_2>0$ if the network is connected. In particular, the eigenvalue $\lambda_2$ is often called \emph{algebraic connectivity}, and it holds that 
$$\lambda_2\leq \dfrac{2|E|}{N-1},$$ 
where $|E|$ is the number of edges, and $2|E|$ can be obtained by summing the diagonal elements of the graph Laplacian. Properties of the spectrum have been investigated for particular network structures~\cite{dorogovtsev2003spectra} including random graphs with given expected degrees~\cite{chung2004spectra} or more general random graphs~\cite{chung2011spectra}. Moreover, the spectrum of the graph Laplacian, as a quantity that encodes the topological structure of the underlying graph, also influences the dynamical properties, such as the stability of the synchronous state, which has been investigated for quite some time~\cite{OthmerScriven,SegelLevin} and has more recently been referred to as master stability function approach~\cite{bonacini2016single, MulasKuehnJost, PecoraCarroll, SunBolltNishikawa}.

The diffusion on the network given by the graph Laplacian only is a linear diffusion and considers pairwise interactions. More recently, nonlinear diffusion (also called nonlinear coupling or, in ecology, \textit{nonlinear dispersal}), and higher-order interactions on networks have been proposed and investigated, see for instance~\cite{bick2022multi, bonetto2022nonlinear,gambuzza2021stability, mancastroppa2023hyper}.

It is widely accepted that the Turing instability is induced by diffusion and requires an activator--inhibitor scheme of interaction between agents \cite{gierer1972theory}. However, in the context of continuous space reaction--diffusion systems a cross-diffusion model has been proposed to describe the spatial segregation of the species~\cite{shigesada1979spatial}. In this model, also known as Shigesada--Kawasaki--Teramoto (SKT) system, the reaction part does not generate an activator--inhibitor mechanism, but spatial patterns may arise thanks to cross-diffusion terms. These terms model those situations in which the movements of individuals of a species depend not only on the species itself but also on the presence of other species. This mechanism can also be observed in other contexts: for instance in the presence of chemotaxis, or also in epidemiology, where the movements of susceptible individuals are clearly influenced by the presence of infected ones~\cite{ottaviano2022global}. Then, cross-diffusion can destabilize a uniform equilibrium and induce pattern formation, even when it is not possible via standard diffusion terms; this phenomenon is known as cross-diffusion-induced instability. Reaction--cross-diffusion models have been extensively studied from different points of view and related to several applications (see~\cite{breden2021influence, conforto2018reaction, desvillettes2019non, gambino2008cross, kuehn2020numerical, lacitignola2018cross, soresina2022hopf, tang2015cross} and references therein).

Knowing all these ingredients, we are interested in the extension and the study of cross-diffusion and cross-diffusion-induced instability on complex networks. In addition to the natural pairing between continuous and discrete space models, this extension has been naively inspired by the discretization of reaction--cross-diffusion equations using finite differences \cite{daus2019entropic}. In fact, the (regular) mesh can be viewed as a regular lattice. Moreover, a natural question at this point is \emph{when/if} the discretized systems show the same patterns as the continuous model (depending on the number of mesh points).

While the theory of pattern formation on networks has been developed for several network structures and dynamical rules, in this direction only a few works have been investigated~\cite{zheng2017pattern,duan2019turing}, where the cross-diffusion terms considered are relatively simple (linear). We show that the theory of pattern formation on networks still holds considering general cross-diffusion terms (written in terms of the graph Laplacian). Also in this case, the conditions for cross-diffusion-driven instability are the same for the network and the continuous space case, where the eigenvalues of the graph Laplacian play the role of the eigenvalue of the Laplace operator. 
Then, we propose and investigate the SKT cross-diffusion model for competing species on a network. The model generalises the linear-diffusion one investigated in~\cite{slavik2020lotka}, which is known to have no heterogeneous (positive) steady states in the weak competition regime. As in the continuous setting, cross-diffusion is the key ingredient in producing non-homogeneous steady-state solutions. Our attention is however focused on the network structures (from regular rings, 2D-lattices to different complex/random graphs) in order to show, how they influence the possible dynamics of the system. In particular, we look at the spectrum of the graph Laplacian (or its distribution for random graphs). The aim of this work is mainly to point out that cross-diffusion terms can be useful ingredients in the study of complex systems, and that they can give rise to very rich dynamics depending crucially on the network topology. 
\medskip 

The paper is organized as follows. In Section \ref{sec:general}, we establish the general framework for cross-diffusion systems on networks and we extend the Turing instability analysis to cross-diffusion-induced instability. In Section \ref{netSKT} we propose a cross-diffusion model for competing species, inspired by the discretization of the SKT model on a 1D domain (presented in Appendix \ref{A:discrSKT}). This model is analyzed on different network structures, from regular rings and lattices, to more complex networks. The non-homogeneous steady states are studied, and the distributions of the eigenvalues for different network structures are presented. The focus of this section is to understand if particular structures are more favourable to leading to cross-diffusion-induced instability. Finally, in Section \ref{sec:conclusion} some concluding remarks can be found.  The link between the network model and the discretization of the reaction-diffusion SKT model is made in Appendix \ref{A:discrSKT}. The Python scripts for the simulations are freely accessible in the GitHub folder \cite{GitHubFolder}.

\section{The general framework}\label{sec:general}

In this section, we extend the Turing instability analysis on networks to cross-diffusion systems. This type of system can model complex natural phenomena in which nontrivial and nonlinear effects also affect the diffusion of the involved quantities. For instance, in ecology competition of species can cause migrations as a prey species tries to avoid predators, while in epidemiology susceptible individuals may try to avoid infected ones.

We consider an undirected network of~$N$ nodes. The network topology is encoded in the graph Laplacian~$L \in \mathbb{R}^{N\times N}$. On each node, we consider two state variables~$(\phi_i, \psi_i)=(\phi_i(t), \psi_i(t))$, evolving over time. The dynamics on each node is influenced by different components: 
\begin{itemize}
\item[-] the single node dynamics, described by the functions $f, \; g$,
\item[-] standard dispersal (random movements), expressed as a diffusive flux of a species to other nodes in terms of potential difference. The diffusion coefficients are denoted by $D_i,\, i=1,2$.
\item[-] cross-diffusion effects, due to the presence of a different species on other nodes. The cross-diffusion coefficients are denoted by $D_{ij},\, i,j=1,2$, while functions $c_i,\, i=1,2$ describes the inter-specific competition of the two species. 
\item[-] self-diffusion effects, due to the presence of the same species on other nodes. The self-diffusion coefficients are denoted by $D_{ii},\, i=1,2$, while functions $s_i,\, i=1,2$ describes the intra-specific competition among individual of the same species.
\end{itemize} 
All the considered functions are supposed to be regular enough (at least sufficiently smooth). 
Then the network dynamics can be modelled by
\begin{equation}\label{dsn_full}
\begin{cases}
\dot{\phi}_i=f(\phi_i,\psi_i)-D_1\displaystyle{ \sum_{j=1}^N l_{ij}\phi_j-D_{11}\sum_{j=1}^N l_{ij}s_1(\phi_j)\phi_j-D_{12}\sum_{j=1}^N l_{ij}c_1(\psi_j)\phi_j,}\\[0.3cm]
\dot{\psi}_i=g(\phi_i,\psi_i)-D_2 \displaystyle{\sum_{j=1}^N l_{ij}\psi_j-D_{22}\sum_{j=1}^N l_{ij}s_2(\psi_j)\psi_j-D_{21}\sum_{j=1}^N l_{ij}c_2(\phi_j)\psi_j,}\\[0.3cm]
i=1,\dots,N
\end{cases}
\end{equation}
where the overdot denotes time differentiation, and $l_{ij}$ are the elements of the graph Laplacian $L$ as defined in equation \eqref{eq:L} and they appear in the diffusion terms (standard, cross- and self-diffusion) since they encode the network structure. 

\medskip
As usual in Turing instability analysis, we consider a stable steady state for the single node dynamics $(\phi_*,\psi_*)$, such that 
$$f(\phi_*,\psi_*)=g(\phi_*,\psi_*)=0, \quad \tr({J_*})<0, \quad \det({J_*})>0,$$
where the matrix $J_*$ is the Jacobian of the single node dynamics evaluated at the homogeneous steady state
$$J_*=\begin{pmatrix}f_\phi(\phi_*,\psi_*)& f_\psi(\phi_*,\psi_*)\\[0.1cm] g_\phi(\phi_*,\psi_*)& g_\psi(\phi_*,\psi_*)\end{pmatrix}.$$ 
It turns out that the state
$$\phi_i=\phi_*, \quad \psi_i=\psi_*,\quad i=1,\dots,N,$$
is a homogeneous steady state for the network. Cross-diffusion induced instability arises when $(\phi_*,\psi_*)$ becomes unstable to inhomogeneous perturbations. 

The linear stability analysis is effectively a suitable combination of the standard cases on networks and in continuous media. We introduce small perturbations $\delta\phi_i,\, \delta\psi_i$ to the uniform state as 
$$(\phi_i, \psi_i)=(\phi_*,\psi_*)+(\delta\phi_i, \delta\psi_i),$$
and substitute this into equation \eqref{dsn_full}. Linearized differential equations for $\delta\phi_i,\, \delta\psi_i$ are
\begin{equation*}
\begin{cases}
\dot{\delta\phi}_i=f_\phi(\phi_*,\psi_*)\delta\phi_i+f_\psi(\phi_*,\psi_*)\delta\psi_i
-D_1\displaystyle{ \sum_{j=1}^N l_{ij}\delta\phi_j}\\
\qquad
-\displaystyle{D_{11}\sum_{j=1}^N l_{ij}\left( s_1(\phi_*)+s'_1(\phi_*)\phi_* \right)\delta\phi_j 
-D_{12}\sum_{j=1}^N l_{ij}\left( c_1(\psi_*)\delta\phi_j+ c'_1(\psi_*)\phi_*\delta\psi_j\right),}\\[0.3cm]

\dot{\delta\psi}_i=g_\phi(\phi_*,\psi_*)\delta\phi_i+g_\psi(\phi_*,\psi_*)\delta\psi_i
-D_2\displaystyle{ \sum_{j=1}^N l_{ij}\delta\psi_j}\\
\qquad
-\displaystyle{D_{22}\sum_{j=1}^N l_{ij}\left( s_2(\psi_*)+s'_2(\psi_*)\psi_* \right)\delta\psi_j 
-D_{21}\sum_{j=1}^N l_{ij}\left( c'_2(\phi_*)\psi_*\delta\phi_j+c_2(\phi_*)\delta\psi_j \right),}\\[0.3cm]
i=1,\dots,N,
\end{cases}
\end{equation*}
and they can be written as
\begin{equation}\label{eq:ODEperturbations}
\begin{pmatrix} \dot{\delta\phi}_i\\\dot{\delta\psi}_i\end{pmatrix}=
\displaystyle{J_*\begin{pmatrix} \delta\phi_i\\\delta\psi_i\end{pmatrix} + D_*\sum_{j=1}^N  l_{ij}}
\begin{pmatrix} \delta\phi_j\\\delta\psi_j\end{pmatrix},\quad i=1,\dots,N,
\end{equation}
where the matrix $D_*$ is the linearization of the diffusion part evaluated at the uniform steady state, given by
$$D_*=\begin{pmatrix}D_1+D_{11}\left( s_1(\phi_*)+s'_1(\phi_*)\phi_* \right)+D_{12}c_1(\psi_*)& D_{12}c'_1(\psi_*)\phi_*\\ 
D_{21}c'_2(\phi_*)\psi_*& D_2+D_{22}\left(  s_2(\psi_*)+s'_2(\psi_*)\psi_*\right) + D_{21}c_2(\phi_*)\end{pmatrix}.$$
We consider the spectrum of the graph Laplacian
$$\sum_{\alpha=1}^N {l_{ij}v_j^{(\alpha)}}=\Lambda_\alpha v_i^{(\alpha)}, \quad i,\alpha=1,\dots,N,$$
where $\Lambda_\alpha$ and $v^{(\alpha)}$ represent the eigenvalues and their associated eigenvectors, respectively. By expanding the perturbations $\delta\phi_i,\, \delta\psi_i$ over the set of Laplacian eigenvectors $v_i^{(\alpha)}$ as
$$\delta\phi_i(t)=\sum_{\alpha=1}^N {c_\alpha \txte^{\lambda_\alpha t} v_i^{(\alpha)}},\qquad 
\delta\psi_i(t)=\sum_{\alpha=1}^N {b_\alpha\txte^{\lambda_\alpha t} v_i^{(\alpha)}},$$
where the constants $c_\alpha$ and $b_\alpha$ refer to the initial conditions,  system \eqref{eq:ODEperturbations} is transformed into $N$ independent linear equations for different normal modes, resulting in the following eigenvalue equation for each $\alpha$:
\begin{equation*}
\lambda_\alpha \begin{pmatrix} c_\alpha\\b_\alpha\end{pmatrix}= (J^*-\Lambda_\alpha D_*)\begin{pmatrix} c_\alpha\\b_\alpha\end{pmatrix}, \qquad \alpha=1,\dots,N.
\end{equation*}
The matrix 
$$M^*_\alpha=J^*-\Lambda_\alpha D_*, \qquad \alpha=1,\dots,N$$
is called characteristic matrix. Then, in order to destabilize the homogeneous steady state, the characteristic matrix $M^*_\alpha$ must have a positive eigenvalue for at least one $\alpha$ corresponding to one eigenvalue $\Lambda_\alpha$ of the graph Laplacian. Note that the eigenvalues of the graph Laplacian appear only in combination with diffusion coefficients.\medskip

\noindent
\textbf{Remark.} Since the eigenvalues of the graph Laplacian are nonnegative, the expression of the characteristic matrix $M^*_\alpha$ turns out to be the same as in the PDEs setting, and the eigenvalues of the graph Laplacian play the role of the eigenvalues of the Laplace operator. Then the conditions on the parameters leading to the instability of the homogeneous steady state are the same.\medskip

In summary, the Turing instability analysis in the continuous setting and on discrete models can be adapted to cross-diffusion systems on networks. Note that depending on the single node dynamics, it is not always possible to destabilize the homogeneous equilibrium by means of standard diffusion terms only (in particular when the system does not have the activator--inhibitor structure). Cross-diffusion induced instability appears when the homogeneous steady state becomes unstable to inhomogeneous perturbations, due to the additional presence of cross-diffusion terms.\medskip 

\noindent
\textbf{Remark.} 
When $\Lambda_\alpha=0$ the characteristic matrix \eqref{eq:Mk} reduces to the Jacobian $J_*$ of the single node dynamics evaluated at the coexistence steady state. Therefore the zero-eigenvalue provides information about the dynamics of an isolated node. The non-zero eigenvalues instead account for the more complex behavior of the network.

\section{The SKT model on networks}\label{netSKT}
The goal of this section is to show that the cross-diffusion terms can destabilize the uniform equilibrium state leading to non-trivial steady states and to study the role and the influence of the network topology on these states. To achieve this, we present here a cross-diffusion model on networks for competing species, analogous to the SKT model presented in the framework of PDEs (see~\cite{breden2021influence, kuehn2020numerical} and references therein). We consider two species, $u$ and $v$, competing for the same resource. A system of ODEs describes the dynamics of an isolated node
\begin{equation}\label{eq:singlenode}
\begin{cases}
\dot{u}=f(u,v)=r_1u-a_1 u^2-b_1uv,\\[0.3cm]
\dot{v}=g(u,v)=r_2v-b_2 uv-a_2v^2,\\
\end{cases}
\end{equation}
where $r_1,\,r_2$ are the growth rates, $a_1,\, a_2$ the intra-specific competition rates and $b_1,\,b_2$ the inter-specific competition rates. It is simple to show that the outcomes of \eqref{eq:singlenode} can be the total extinction, competitive exclusion or the coexistence of the species, depending on the parameter values and on the initial conditions. 

We consider now a network of $N$ nodes, its topology being fixed by the Laplacian matrix of the graph~$L=(l_{ij})_{i,j=1,\dots,N}$. On each node the population sizes are denoted with $u_i,\,v_i,\, i=1,\dots,N$ and the dynamics on each node in case of isolation (neglecting the diffusion process) is given by \eqref{eq:singlenode}. When the nodes are connected in a network, we consider diffusive movements of the individuals between linked nodes modelled in the usual way
$$-\sum_{j=1}^N l_{ij}u_j=\displaystyle{\sum_{j=1}^N (u_j-u_i)}.$$
Then we consider cross- and self-diffusion effects, which cause extra movements of individuals due to intra- and inter-competition pressure. In particular, individuals living on a node leave to reach other nodes because of the presence of individuals of the same species (self-diffusion) or of the competing species (cross-diffusion). Figure~\ref{fig:gain_loss_terms} summarizes the gain and loss terms considered here. 
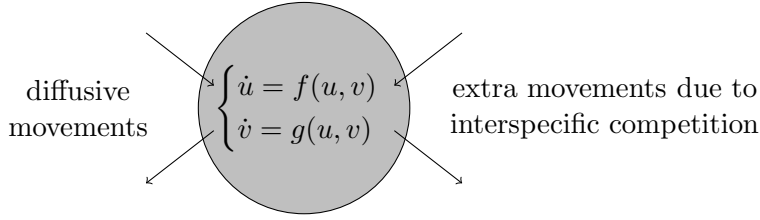
\begin{figure}
\begin{center}
\begin{tikzpicture}
\filldraw[fill=gray!50!white] (0,0) circle (40pt);
\draw[](0.1,0) node {$\begin{cases}
    \dot{u}=f(u,v)\\ \dot{v}=g(u,v)\\
\end{cases}$};

\draw [->] (-1.2,-0.3) -- (-2.1,-1);

\draw [->] (-2.1,1) -- (-1.2,0.3);
\node[align=center] at (-3,0) {diffusive\\ movements};

\draw [->] (1.2,-0.3) -- (2.1,-1);

\draw [->] (2.1,1) -- (1.2,0.3);
\node[align=center] at (4,0) {extra movements due to\\ interspecific competition};
\end{tikzpicture}
\end{center}
\caption{Schematic representation of the dynamics on a node in the network.}\label{fig:gain_loss_terms}
\end{figure}

In detail, they can be described by
$$-\sum_{j=1}^N l_{ij}u_j^2,\qquad -\sum_{j=1}^N l_{ij}v_ju_j.$$
Then, the system of ODEs describing the dynamics of the network is given by
\begin{equation}\label{eq:sysnet}
\begin{cases}
\dot{u}_i=f(u_i,v_i)-d_1\displaystyle{ \sum_{j=1}^N l_{ij}u_j-D_{11}\sum_{j=1}^N l_{ij}u_j^2-D_{12}\sum_{j=1}^N l_{ij}v_ju_j},\\[0.3cm]
\dot{v}_i=g(u_i,v_i)-d_2 \displaystyle{\sum_{j=1}^N l_{ij}v_j-D_{22}\sum_{j=1}^N l_{ij}v_j^2-D_{21}\sum_{j=1}^N l_{ij}u_jv_j},\\[0.3cm]
i=1,\dots,N
\end{cases}
\end{equation}
where $d_i,\, D_{ij}, \, i,j=1,2$ are the coupling strengths (or diffusion coefficients).

In order to obtain cross-diffusion instability effects, it is sufficient to consider $D_{11}=D_{22}=0$ and $d_1=d_2=d$ (analogous to the PDEs case). This choice corresponds to the simpler cross-diffusion system
\begin{equation}\label{eq:sysnet_noself}
\begin{cases}
\dot{u}_i=f(u_i,v_i)-d\displaystyle{ \sum_{j=1}^N l_{ij}u_j-D_{12}\sum_{j=1}^N l_{ij}v_ju_j},\\[0.3cm]
\dot{v}_i=g(u_i,v_i)-d\displaystyle{\sum_{j=1}^N l_{ij}v_j-D_{21}\sum_{j=1}^N l_{ij}u_jv_j}.\\[0.3cm]
i=1,\dots,N.
\end{cases}
\end{equation}

Following \cite{slavik2020lotka}, it can be proven that solutions with nonnegative initial conditions remain nonnegative all the time.
\begin{thm} If $u,\,v: \mathbb{R}_+\to \mathbb{R}^N$ satisfy \eqref{eq:sysnet_noself} and $u(0),\, v(0)\geq 0$, then $u(t),\,v(t)\geq 0$ for all $t>0$.
\end{thm}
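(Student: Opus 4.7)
The strategy is to verify that the nonnegative orthant $\mathbb{R}_+^{2N}$ is forward invariant under \eqref{eq:sysnet_noself} by checking the quasi-positivity (Kamke) condition at every boundary point. First, I would expand each Laplacian sum using $l_{ij}=k_i\delta_{ij}-a_{ij}$ to separate the diagonal contribution from the off-diagonal ones. After rearranging, the $u_i$-equation takes the form
\[
\dot u_i \;=\; u_i\bigl[r_1-a_1u_i-b_1v_i-dk_i-D_{12}k_iv_i\bigr] + d\sum_{j\neq i}a_{ij}u_j + D_{12}\sum_{j\neq i}a_{ij}u_jv_j,
\]
with the analogous decomposition for $\dot v_i$. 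The key structural observation is that the two sums sitting outside the bracket are nonnegative linear combinations of products of components of $u$ and $v$, because $a_{ij}\geq 0$ and $d,D_{12}\geq 0$.

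Next I would verify quasi-positivity: whenever $(u,v)\in\mathbb{R}_+^{2N}$ and $u_{i_0}=0$ for some index $i_0$, the bracketed term in the $i_0$-th equation is annihilated by the prefactor $u_{i_0}=0$, while the two off-diagonal sums are nonnegative by the preceding observation. Hence $\dot u_{i_0}\geq 0$, and an identical argument handles the case $v_{i_0}=0$. This is precisely the condition that the vector field lies in the tangent cone to $\mathbb{R}_+^{2N}$ at every boundary face.

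Finally, to conclude forward invariance rigorously---since tangency alone does not immediately rule out an instantaneous exit---I would use the standard $\eps$-perturbation argument. Consider the system with right-hand side augmented by $+\eps$ in each of the $2N$ equations, and initial data $u_i^\eps(0)=u_i(0)+\eps>0$, $v_i^\eps(0)=v_i(0)+\eps>0$. If some component first reached zero at a minimal time $t_0^\eps>0$, say $u_{i_0}^\eps(t_0^\eps)=0$, then at $t_0^\eps$ all components are $\geq 0$ by minimality, and the quasi-positivity computation yields $\dot u_{i_0}^\eps(t_0^\eps)\geq \eps>0$, contradicting the fact that $u_{i_0}^\eps$ just decreased to $0$. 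Hence $u^\eps,v^\eps>0$ on their interval of existence, and continuous dependence on the parameter $\eps$ (on compact subintervals of the maximal existence interval of the original system) delivers $u,v\geq 0$ in the limit $\eps\to 0^+$. The main obstacle is exactly this tangency subtlety, which the perturbation (or equivalently an appeal to Nagumo's tangent-cone theorem) circumvents; once the algebraic decomposition above is in hand, the remainder is routine.
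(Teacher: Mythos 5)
Your proof is correct and follows essentially the same route as the paper: both establish that the nonnegative orthant is positively invariant by checking the sign of the vector field on the boundary faces of $\{u\geq 0,\ v\geq 0\}$. The paper delegates the actual computation to the cited reference and merely remarks that the cross-diffusion terms ``have the correct sign configuration''; your decomposition $-\sum_{j} l_{ij}(\cdot)_j = -k_i(\cdot)_i + \sum_{j\neq i}a_{ij}(\cdot)_j$, the resulting quasi-positivity check, and the $\varepsilon$-perturbation argument supply exactly the details that remark stands in for.
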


\begin{proof}
We must prove the fact that the set
$$S=\{(u^1,u^2) \in \mathbb{R}^N \times \mathbb{R}^N : u^1\geq 0, \, v^1\geq 0\}$$
is a positively invariant region for system \eqref{eq:sysnet_noself}. The calculations follow \cite{slavik2020lotka} and it is easy to see that the extra terms coming from cross-diffusion have the correct sign configuration. 
\end{proof}

\textbf{Remark.} When only linear diffusion is considered \cite{slavik2020lotka}, a priori bounds for the solutions can be obtained. With cross-diffusion terms, we cannot obtain comparison results as in \cite[Theorem 3.2]{slavik2020lotka}.\medskip

We now proceed with the linearized analysis close to the coexistence state $(u_*,v_*)$ in the weak-competition regime (namely $a_1a_2-b_1b_2>0$ and $(u_*,v_*)$ is stable for the single node dynamics \eqref{eq:singlenode}). Then, according to Section \ref{sec:general}, the characteristic matrix is
\begin{equation}\label{eq:Mk}
M_\Lambda=J_*-\Lambda D_*
\begin{pmatrix}
-a_1 u_* & -b_1 u_*\\ 
-b_2 v_* & -a_2 v_*
\end{pmatrix}
-\Lambda 
\begin{pmatrix}
d+D_{12}v_*& D_{12}u_*\\ 
D_{21}v_*& d+D_{21}u_*
\end{pmatrix},
\end{equation}
where $\Lambda$ denotes an eigenvalue of the graph Laplacian. Remembering that the graph Laplacian has non-negative eigenvalues, then the characteristic matrix has a negative trace and its determinant determines the stability/instability of the homogeneous steady state in which $u_i=u_*,\; v_i=v_*,\; i=1,\dots, N$. In particular, we consider the \emph{weak competition case} $a_1a_2-b_1b_2>0$, leading to a stable steady state where $\tr (J_*)<0$ and $\det(J_*)>0$. Note that the obtained characteristic matrix is the same as in the reaction--cross-diffusion system and the difference between the continuum and the discrete case is played by the eigenvalues of the Laplacian and the graph Laplacian respectively. The trace of the characteristic matrix is negative. Then a cross-diffusion induced instability may appear if its determinant becomes negative for at least one $\Lambda$. The determinant can be written as
\begin{equation}\label{eq:detM_d}
\det (M_\Lambda)=A_\Lambda d^2+B_\Lambda d+C_\Lambda,
\end{equation}
where the coefficients $A_\Lambda,\;B_\Lambda,\; C_\Lambda$ of the second order polynomial in $d$ are given by
$$A_\Lambda=\Lambda^2,\quad 
B_\Lambda=D_{12}v_*\Lambda^2+D_{21}u_*\Lambda^2-\tr (J_*\Lambda), \quad 
C_\Lambda=-(D_{12}\alpha+D_{21}\beta)\Lambda+\det({J_*}).$$
In order to have a negative determinant, we need $C_\Lambda<0$. Note that there are two cases:
\begin{itemize}
\item $D_{12}\alpha+D_{21}\beta\leq0$, then no bifurcation can appear, independently of the network topology.
\item $D_{12}\alpha+D_{21}\beta>0$, then $C_\Lambda$ can be negative and bifurcations can appear depending on the eigenvalues of the Laplacian of the graph. In detail, we obtain the following threshold 
$$\Lambda>\Lambda_*=\dfrac{\det({J_*})}{D_{12}\alpha+D_{21}\beta}.$$
\end{itemize}

\noindent
\textbf{Remark.} This condition can be combined with other information on the spectrum of a particular graph to prove the stability of the homogeneous state. Since $\Lambda_N\leq N$, the simplest observation is that if $N<\Lambda_*$, then no cross-diffusion induced instability is possible. We can also obtain conditions regarding the stability of a particular mode (eigenvalue). For instance, for a regular graph of $N$ nodes and degree $2K$, we know that the first nontrivial eigenvalue (also known as \emph{algebraic connectivity}) satisfies $\Lambda_2\leq 2K N / (N-1)$. If $\Lambda_*>2K N / (N-1)$, then the mode related to $\Lambda_2$ is stable.

\medskip
\noindent
\textbf{Remark.} From equation \eqref{eq:detM_d} it is easy to see the cross-diffusion terms are the key ingredient to the appearance of nonhomogeneous steady states. In fact, if $D_{12}=D_{21}=0$, then $\det (M_\Lambda)$. This means that, regardless of the network structure, standard diffusion terms cannot lead to Turing instability.

\medskip
We look now at the determinant of the characteristic matrix as a second-order polynomial in $\Lambda$:
$$\det (M_\Lambda (\Lambda))= 
d(d+D_{12}v_*+D_{21}u_*)\Lambda^2
-(D_{12}\alpha+D_{21}\beta+d~\tr (J_*))\Lambda
+\det{J_*}.$$
Depending on the parameter set (the discriminant of the second-order polynomial must be positive), we can obtain an instability region $\Omega_*=(\Lambda_{*1},\Lambda_{*2})$ for the eigenvalues of the graph Laplacian. 
\subsection{Cross-diffusion induced instability in the SKT network}
This section is mainly devoted to showing that cross-diffusion terms are able to destabilize the uniform state, leading to the so-called cross-diffusion induced instability. The key point is that this effect is not possible without them. To this end, we use a simple network topology, in order to highlight the effect per se. The set of fixed parameters appears in Table~\ref{tab:SKTparam}, while the others will be specified each time in the text. The different network structures have been generated using the Python \cite{python} package NetworkX \cite{hagberg2008exploring}.

\textbf{Remark.} In the simulations, we do not vary the linear diffusion parameter $d$. As in the PDEs case, large values of $d$ tend to stabilise the homogeneous state.

\begin{table}
\centering
\begin{tabular}{ccccccccc}
\hline\\[-0.3cm]
$r_1$&$r_2$&$a_1$&$a_2$&$b_1$&$b_2$&$d$&$d_{12}$&$d_{21}$\\
\hline\\[-0.3cm]
5&2&3&3&1&1&0.03&3&0\\
\hline
\end{tabular}
\caption{Set of parameter values of the single node dynamics and diffusion coefficients of the SKT model~\eqref{eq:sysnet_noself} used in the numerical simulations. The set~$r_i,\,a_i,\,b_i,\, (i=1,2)$ corresponds to the weak competition case~($a_1a_2-b_1b_2>0$), namely the homogeneous steady states is stable for the single node dynamics. The remaining parameters related to the network structure will be specified in the text.}
\label{tab:SKTparam}
\end{table}

\subsubsection{$2K$-regular ring lattice}
A~$2K$-regular ring lattice is a graph with~$N$ nodes in a ring structure in which each node is connected to its~$2K$ neighbors~($K$ on either side)~\cite{wu2011robustness}. The associated graph Laplacian is a matrix with three bands (in the centre and in the corners NE and SW), defined by
$$l_{ij}=\begin{cases}
2K, & i=j,\\
-1,& 0<|i-j| \textnormal{mod}(N-1-K)\leq K,\\
0, & j<i-K \textnormal{ or } j>i+K.\end{cases}$$
The structures of the network and of the graph Laplacian are shown in Figure~\ref{fig:gLap_ring2K}.
The closed formula for the eigenvalues is 
$$\Lambda_j=2K-\sum_{k=1}^{K}{2\cos{\left( \dfrac{2\pi k(j-1)}{N}\right)}}, \quad j=1,\dots,N.$$
\begin{figure}
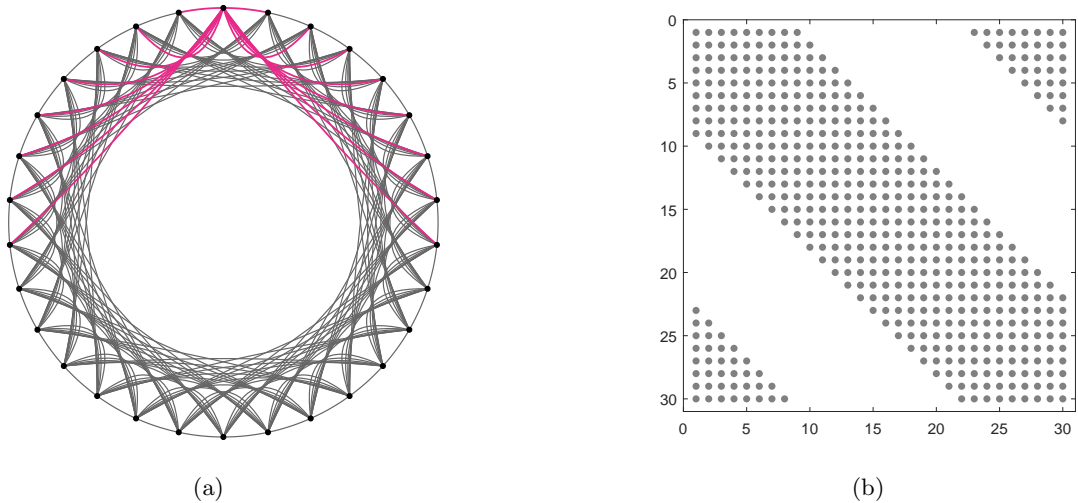

\subfloat[\label{ring_net}]{
\begin{overpic}[width=0.5\textwidth, trim=8cm 1.5cm 8cm 2cm, clip]{./Figures/ring}
\end{overpic}
}
\subfloat[\label{ring_Lap}]{
\begin{overpic}[width=0.5\textwidth]{./Figures/gLap_ring2K}
\end{overpic}
}
\caption{Structure of a $2K$-regular ring lattice and of its graph Laplacian. \protect\subref{ring_net} Structure of the regular $2K$-ring with $N=30,\, K=8$, in which the $2K$ nearest neighbors ($K$ on each side) of a particular node are highlighted in magenta. \protect\subref{ring_Lap} Structure of the corresponding graph Laplacian, where grey dots indicate non-zero elements.}\label{fig:gLap_ring2K}
\end{figure}

In Figure \ref{fig:ring_spectrum_NK} the spectrum of the graph Laplacian is reported for different values of $N$ and $K$, in order to study the possible appearance of patterns. In particular, if an eigenvalue is located in the instability region $\Omega_*=(\Lambda_{*1},\Lambda_{*2})$, marked with a red stripe, then system \eqref{eq:sysnet_noself} admits a stable nonhomogenous steady state.
We can observe in Figure \ref{ring_spectrum_K} that the value of nonzero eigenvalues increases when $K$ increases (and $N$ is fixed); for small values of $K$ the spectrum lies below the threshold value $\Lambda_{*1}$ while increasing $K$ we pass from a situation in which part of the spectrum is located in the instability region to a situation in which only the smallest nonzero eigenvalue is present. Finally, for $K>25$ all the nonzero eigenvalues are greater than the threshold value $\Lambda_{*2}$, namely no stable pattern can appear. On the contrary, increasing $N$ with a fixed $K$ leads to smaller eigenvalues in the spectrum intersecting the instability region (Figure \ref{ring_spectrum_N}). However, this will lead to different nonhomogeneous solutions. 
\begin{figure}
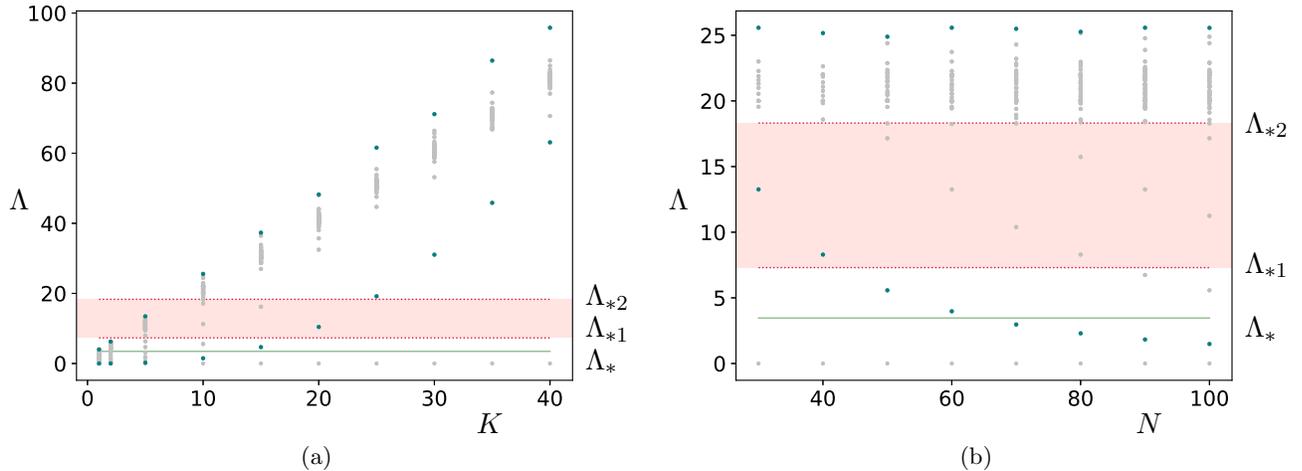

\subfloat[\label{ring_spectrum_K}]{
\begin{overpic}[width=0.5\textwidth]{./Figures/RING_spectrum_K}
\put(75,0){$K$}	
\put(92,10){$\Lambda_{*}$}	
\put(92,15){$\Lambda_{*1}$}	
\put(92,20){$\Lambda_{*2}$}	
\put(2,35){$\Lambda$}	
\end{overpic}
}
\subfloat[\label{ring_spectrum_N}]{
\begin{overpic}[width=0.5\textwidth]{./Figures/RING_spectrum_N}
\put(75,0){$N$}	
\put(92,15){$\Lambda_{*}$}	
\put(92,25){$\Lambda_{*1}$}	
\put(92,47){$\Lambda_{*2}$}	
\put(2,35){$\Lambda$}	
\end{overpic}
}
\caption{Cross-diffusion driven instability on regular rings. Grey dots mark the eigenvalues, while the first and the last nonzero eigenvalues appear in green. The instability region $\Omega_*=(\Lambda_{*1},\Lambda_{*2})$ and the threshold $\Lambda_*$ are marked with the red stripe and the green horizontal solid line, respectively (relevant to the parameter set in Table~\ref{tab:SKTparam} and $d=0.03$). \protect\subref{ring_spectrum_K} Spectrum of the graph Laplacian of the regular $2K$-ring with $N=100$ and different values of $K$. \protect\subref{ring_spectrum_N} Spectrum of the graph Laplacian of the regular $2K$-ring with $K=10$ and different values of $N$.}\label{fig:ring_spectrum_NK}
\end{figure}

In Figure~\ref{fig:ring_dyn_NK} we show different outcomes of the cross-diffusion model \eqref{eq:sysnet_noself} for the ring structure of $N=100$ nodes and three different values of $K$, corresponding to different locations of the spectrum with respect to the instability region. Trajectories of each node over time and the final configuration of the network are reported: varying $K$ the final configuration is not homogeneous, but a different pattern can appear. 
\begin{figure}
\subfloat[$K=10$\label{RING_N100_K10}]{
\begin{overpic}[width=0.5\textwidth]{./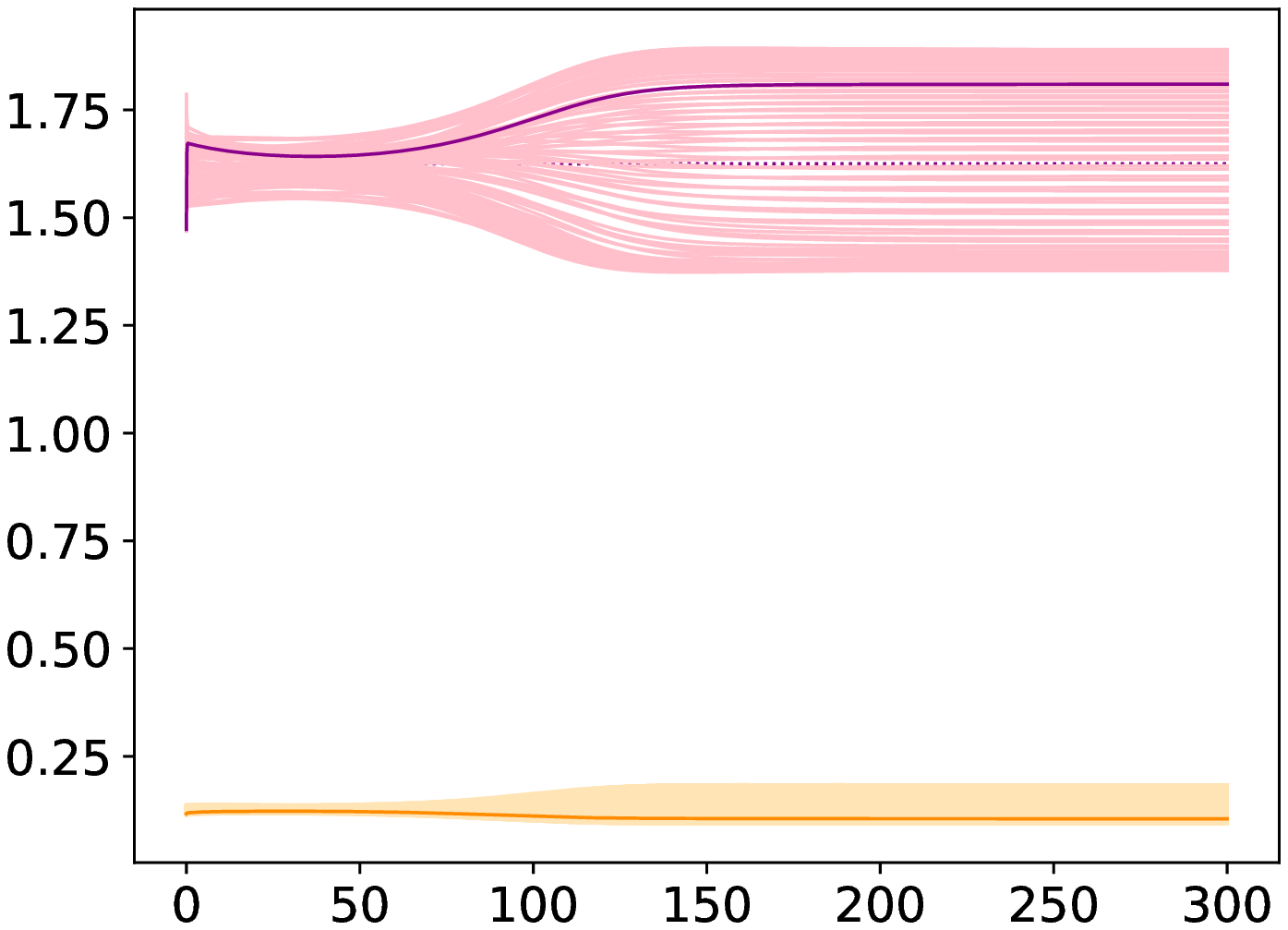}
\put(50,-2){$t$}
\put(0,35){\rotatebox{90}{$u_i,\,v_i$}}		
\put(92,55){$u_*$}	
\put(92,10){$v_*$}
\end{overpic}
\begin{overpic}[width=0.5\textwidth]{./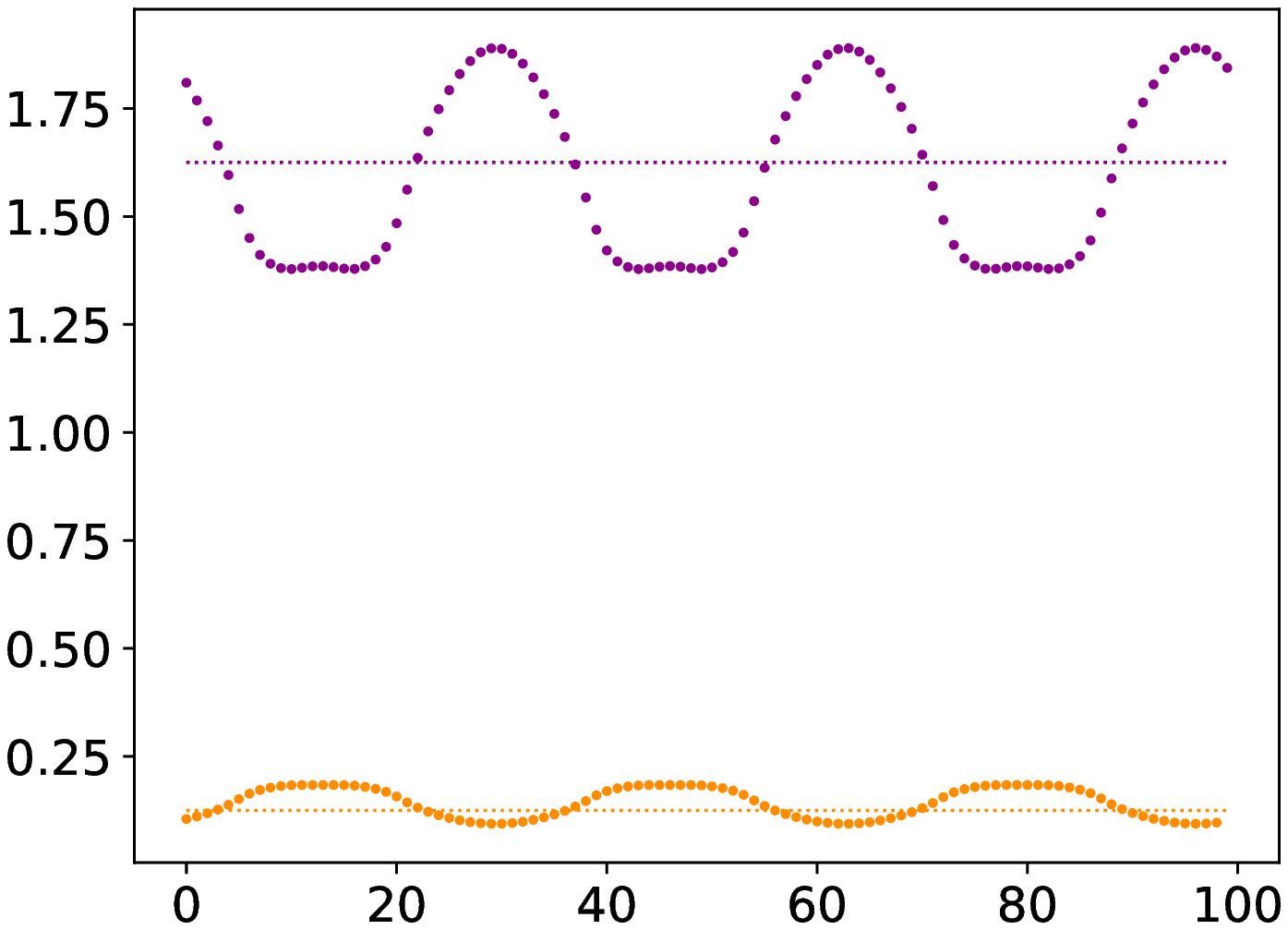}
\put(50,-2){$i$}
\put(0,35){\rotatebox{90}{$u_i,\,v_i$}}			
\put(92,55){$u_*$}	
\put(92,10){$v_*$}
\end{overpic}
}\\
\subfloat[$K=15$\label{RING_N100_K15}]{
\begin{overpic}[width=0.5\textwidth]{./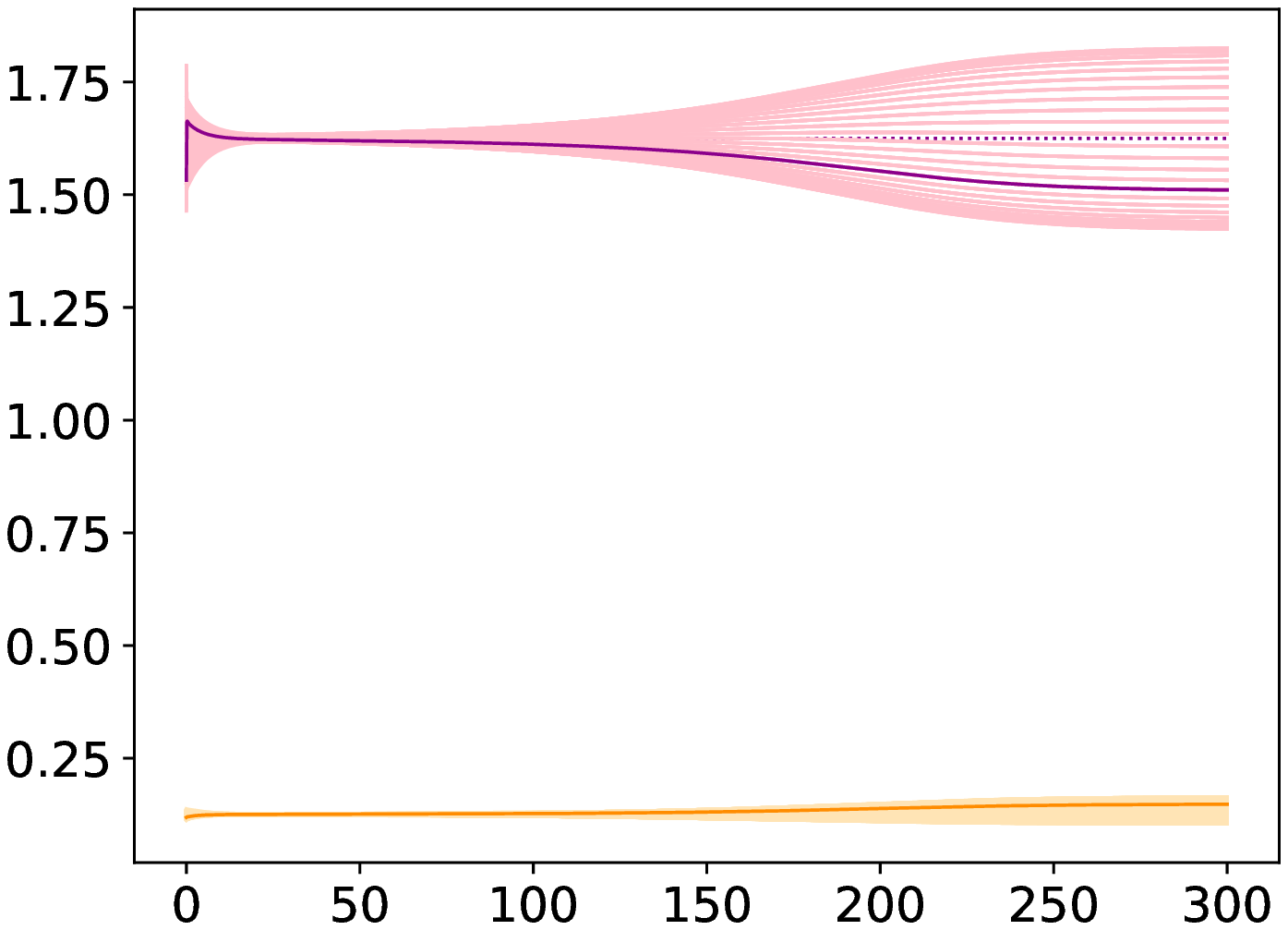}
\put(50,-2){$t$}
\put(0,35){\rotatebox{90}{$u_i,\,v_i$}}		
\put(92,55){$u_*$}	
\put(92,10){$v_*$}	
\end{overpic}
\begin{overpic}[width=0.5\textwidth]{./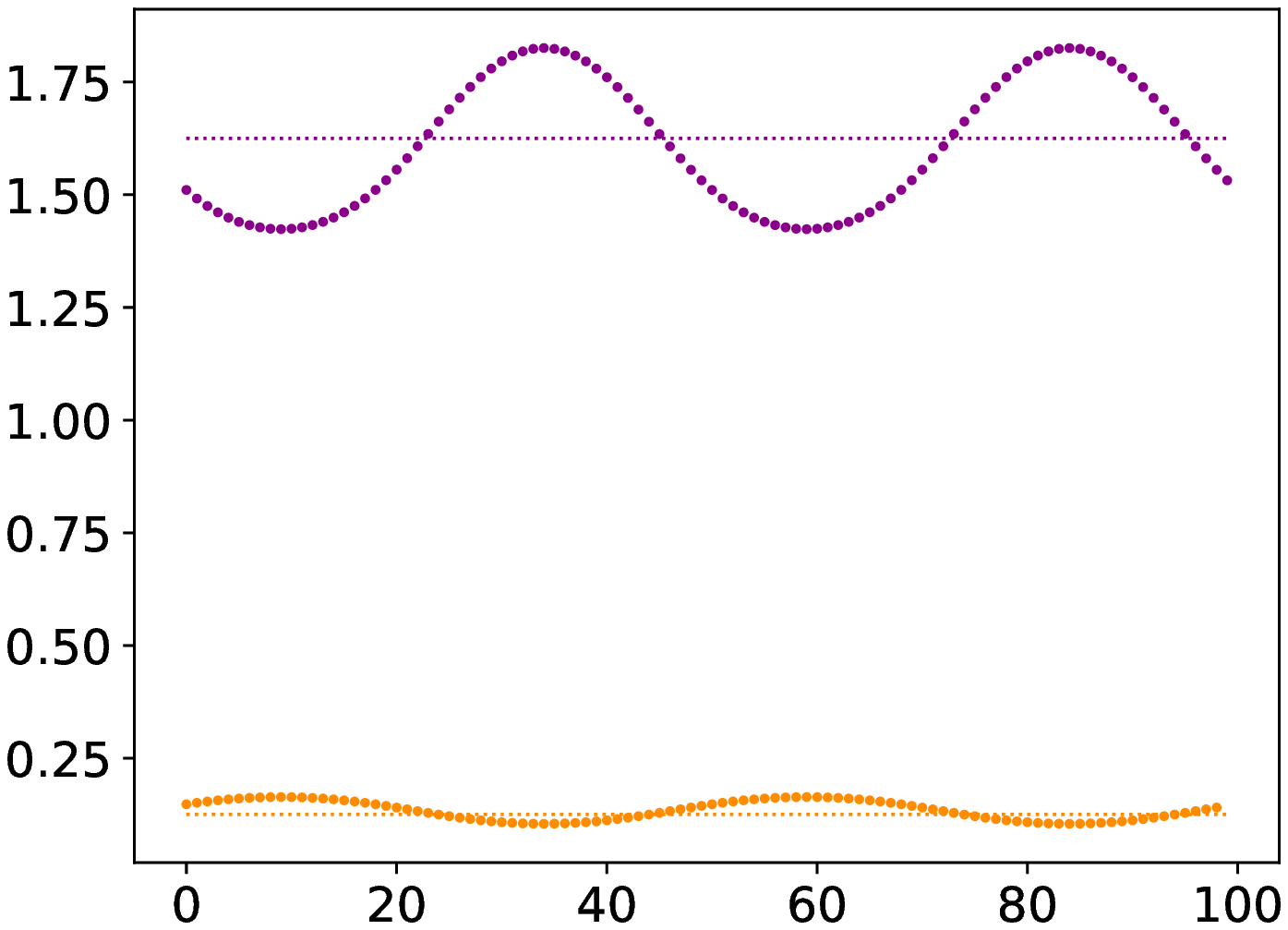}
\put(50,-2){$i$}	
\put(0,35){\rotatebox{90}{$u_i,\,v_i$}}		
\put(92,55){$u_*$}	
\put(92,10){$v_*$}	
\end{overpic}
}\\
\subfloat[$K=20$\label{RING_N100_K20}]{
\begin{overpic}[width=0.5\textwidth]{./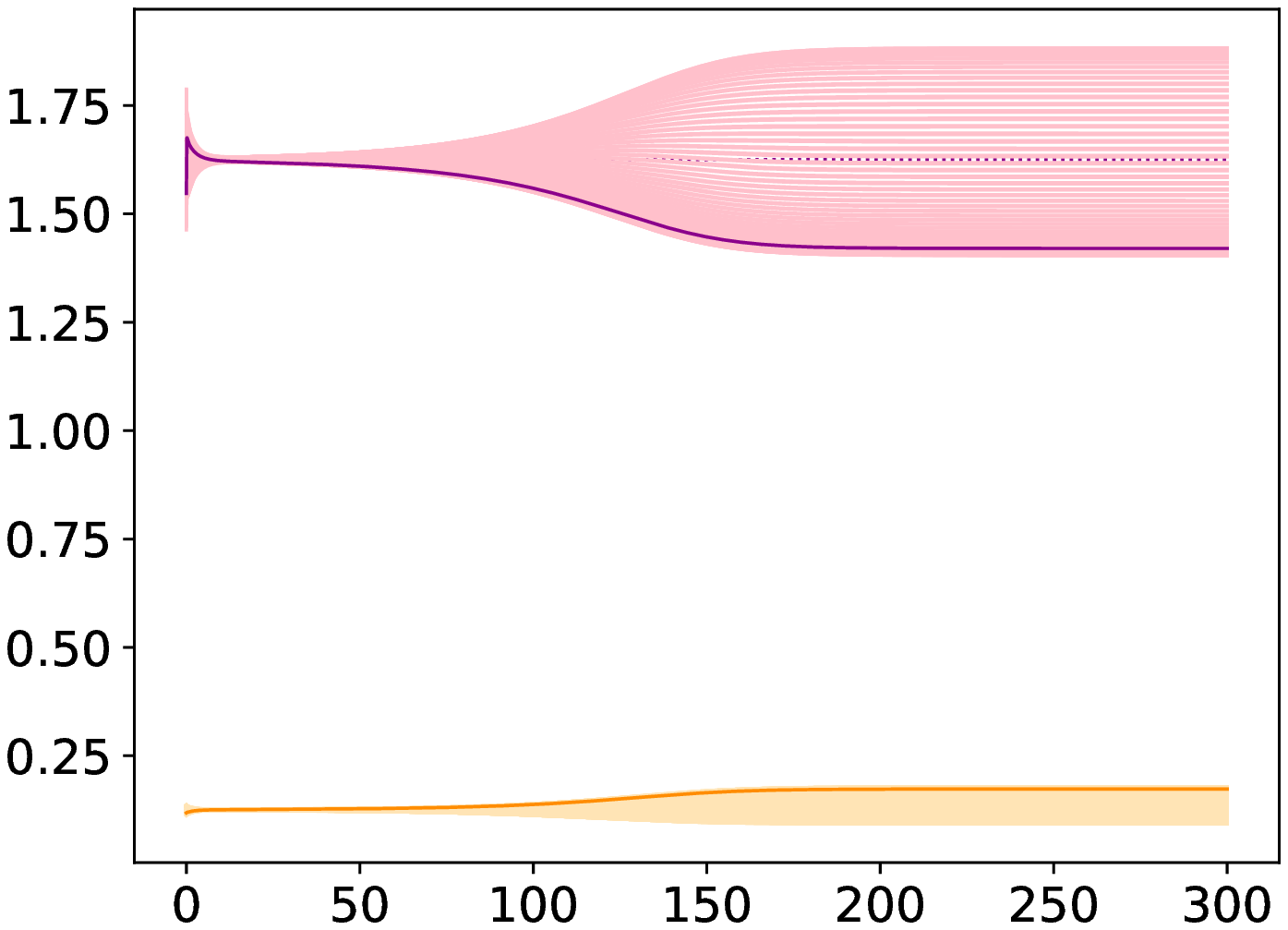}
\put(50,-2){$t$}
\put(0,35){\rotatebox{90}{$u_i,\,v_i$}}		
\put(92,55){$u_*$}	
\put(92,10){$v_*$}
\end{overpic}
\begin{overpic}[width=0.5\textwidth]{./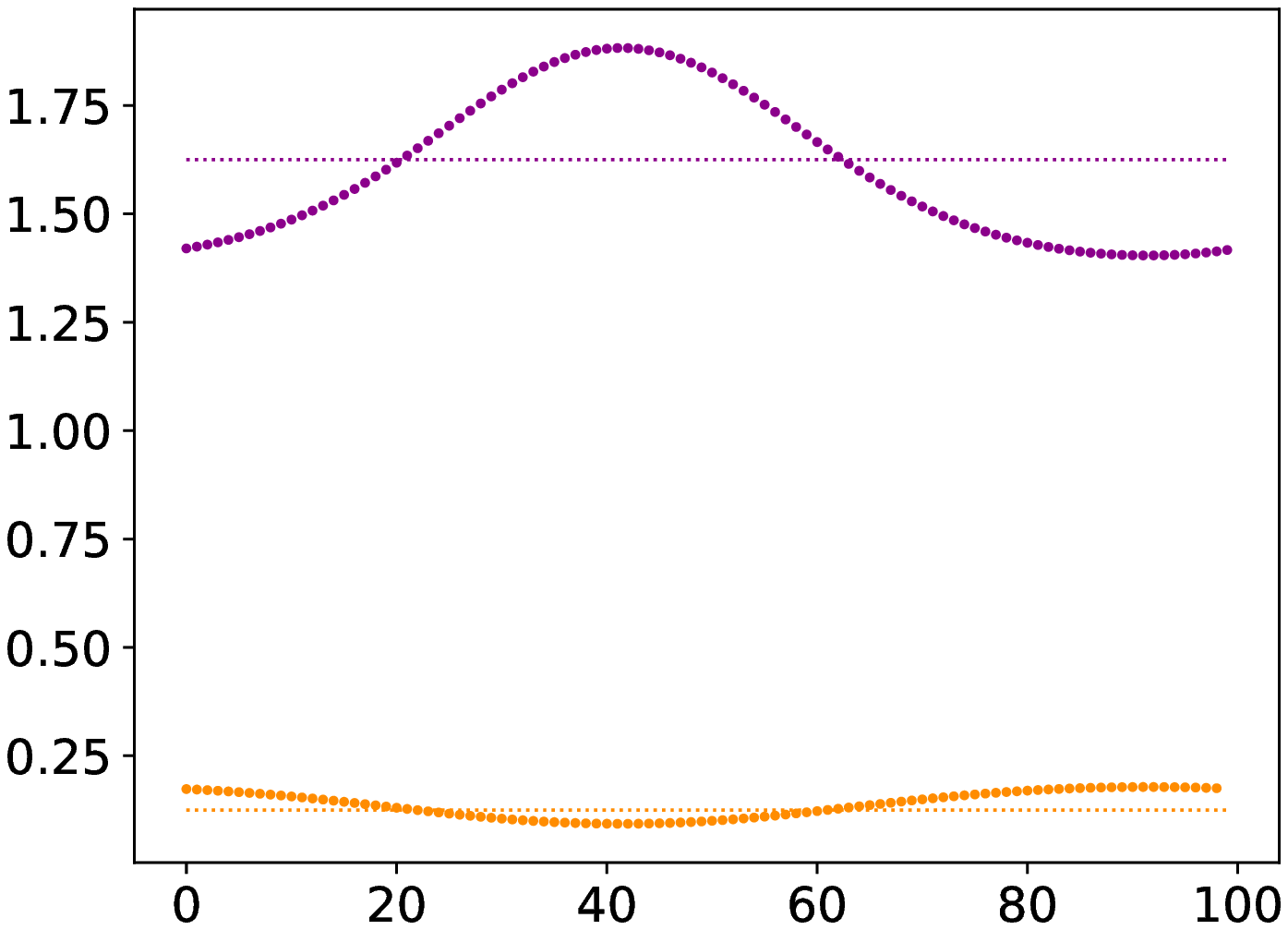}
\put(50,-2){$i$}	
\put(0,35){\rotatebox{90}{$u_i,\,v_i$}}		
\put(92,55){$u_*$}	
\put(92,10){$v_*$}	
\end{overpic}
}
\caption{Dynamics on a $2K$-regular ring of $N=100$ nodes for different values of $K$. The parameter set is reported in Table~\ref{tab:SKTparam} and $d=0.03$. Initial conditions are random perturbations of the homogeneous state. Purple and orange correspond to $u$ and $v$, respectively. \emph{(Left)} Dynamics over time. The trajectory of one node is highlighted, while the others are shown in a lighter color; each node is identified via its index $i\in\{1,2,\ldots, N\}$. \emph{(Right)} Stable configurations on the ring. Different kinds of patterns appear.}\label{fig:ring_dyn_NK}
\end{figure}

\subsection{Cross-diffusion induced instability and graph topology}
This section is devoted to showing, how the cross-diffusion terms are related to the graph topology. We consider different types of graphs, including random graphs such as \textit{small world} and \textit{Erd{\H{o}}s--R{\'e}nyi} networks. The same parameter set describes the dynamics, but different outcomes emerge as the result of the underlying topology. As a key factor, the spectral properties of the graph Laplacian are compared. 

\subsubsection{2D-Lattices}
We consider here three different two-dimensional grid graphs: triangular, square and hexagonal lattices. In a triangular lattice graph, each square unit has a diagonal edge. The square lattice has each node connected to its four nearest neighbors, while in the hexagonal lattice nodes and edges are the hexagonal tiling of the plane. These three structures constitute three different network topologies for the same set of nodes. Note that, the degree is the same for all the nodes (except for ``boundary nodes''): 6 in the triangular lattice, 4 in the square lattice and 3 in the hexagonal lattice. 

In Figure \ref{fig:lattice_346} we show the location of the spectra of the three topologies ($N=110$) with respect to the instability region (obtained using the parameter set in Table \ref{tab:SKTparam}). Note that the value of the largest eigenvalue does not change significantly increasing $N$, since the lattices are almost regular. It can be observed that nonhomogeneous steady states cannot appear on the hexagonal grid, while in the triangular and square lattices, the homogeneous steady state is unstable. The steady patterns on a triangular and square lattice with $N=400$ nodes are shown in Figure \ref{fig:LATTICE_34_steady_u} with respect to the $u$ variable.

\begin{figure}
\begin{center}
\begin{overpic}[width=0.5\textwidth]{./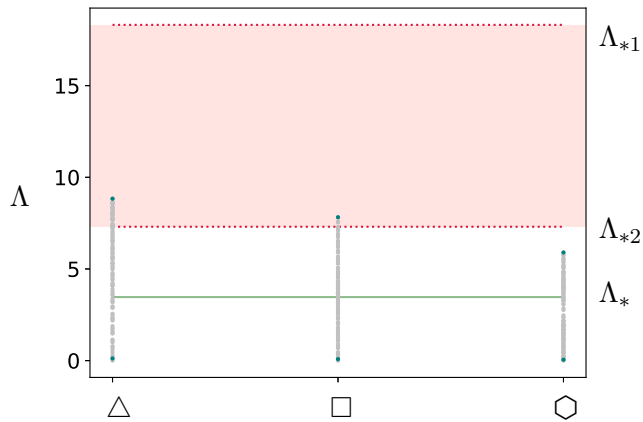}
\put(0,35){$\Lambda$}		
\put(15,2){$\triangle$}	
\put(50,2){$\square$}
\put(85,2){\Large{$\varhexagon$}}
\put(92,30){$\Lambda_{*2}$}	
\put(92,60){$\Lambda_{*1}$}	
\put(92,20){$\Lambda_{*}$}
\end{overpic}
\end{center}
\caption{Cross-diffusion driven instability on triangular ($\triangle$), square ($\square$) and hexagonal ($\varhexagon$) lattices (with $N=110$). Grey dots mark the eigenvalues, while the first and the last nonzero eigenvalues appear in green. The instability region $\Omega_*=(\Lambda_{*1},\Lambda_{*2})$ and the threshold $\Lambda_*$ are marked with the red stripe and the green horizontal solid line, respectively (relevant to the parameter set in Table~\ref{tab:SKTparam} and $d=0.03$).}\label{fig:lattice_346}

\subfloat[triangular\label{LATTICE_3_steady_u}]{
\begin{overpic}[width=0.5\textwidth]{./Figures/LATTICE_3_steady_u}
\end{overpic}
}
\subfloat[square\label{LATTICE_3_steady_u1}]{
\begin{overpic}[width=0.5\textwidth]{./Figures/LATTICE_4_steady_u}
\end{overpic}
}
\caption{Steady state ($u$ variable) on a triangular and square lattices with $N=400$ nodes. The parameter set is reported in Table~\ref{tab:SKTparam} and $d=0.03$.  Initial conditions are random perturbations of the homogeneous state.}\label{fig:LATTICE_34_steady_u}
\end{figure}
\FloatBarrier
\subsubsection{Random graphs}
We now turn our attention to several random graphs. Also in this case we want to study the influence of a particular structure on the emergence of nonhomogeneous steady states in the network. As in the previous section, we look at the nonzero eigenvalues of the graph Laplacian. Of course, since we are now dealing with random graphs, we consider the distribution of the eigenvalues (mean values and the corresponding variance) obtained with several realizations of the same random graph. 

\medskip
\noindent
We consider the following random graphs (briefly recalling the definition and some properties).
\begin{itemize}
\item[-] \emph{Regular-random graph}\\
A random-regular graph is chosen uniformly from the set of all $K$-regular graphs with $N$ nodes.
\item[-] \emph{small-world graph (Watts--Strogatz)} \cite{watts1998collective} 
Starting from a ring lattice with $N$ nodes and $K$ edges per node, each edge is rewired at random with probability $p$. This construction allows to ``tune'' the graph between $K$-regular rings (p=0) and random graphs ($p=1$). The small-world region ($10^{-4}<p<10^{-1}$) is characterized by a small average path length and a large clustering coefficient.
\item[-] \emph{binomial graph (Erd\H{o}s--R\'eyni}) \cite{erdos1960evolution}
Considering $N$ nodes, each of the possible edges is chosen with probability $p$. 
\item[-] \emph{preferential attachment model (Barab\'asi--Albert)} \cite{barabasi1999emergence}
A graph of $N$ nodes is grown by attaching new nodes each with $K$ edges that are preferentially attached to existing nodes with a high degree.
\end{itemize}

We compare these different graph topologies and the appearance of cross-diffusion-induced instability. As in the previous sections, we are interested in the spectrum of the graph Laplacian. Since we deal with particular classes of random graphs, general results cannot be achieved by just looking at a particular realization. For each type of structure, we fix the number of nodes $N=100$ varying the other network parameters (the probability $p$ for Watts-Strogatz and Erd\H{o}s--R\'eyni graphs, the number of edges of a node for random-regular and Barab\'asi--Albert graphs). We generate 1000 realizations for each structure and each parameter value, obtaining a distribution of the eigenvalues. In Figure \ref{fig:graphtopology_spectrum}, the mean (dots) and the variance (error bars) of the eigenvalues of the graph Laplacian are shown for the four types of structures. The first nontrivial eigenvalue and the last one are marked in green. The red stripe denotes the instability region $(\Lambda_{*1},\Lambda_{*2})$, while the green solid horizontal line denote the threshold $\Lambda_{*}$, relevant to the parameter set in Table~\ref{tab:SKTparam}. 

In a regular-random graph (Figure \ref{RR_network}) it can be seen that the cross-diffusion induced instability appears on average in the region when $2\leq K/2 \leq 13$ and $N=100$, while greater values of the node-degree only lead to a very small probability of instability and the uniform steady state is expected to be stable. In a small-world network, with $K/2=15$ being fixed, we report in Figure \ref{SW_network} the averages and variances of the  eigenvalues for different values of the probability $p$ of rewiring an edge in a larger interval than the small-world regime. It turns out that the systems show with a probability well bounded away from zero non-uniform steady states for the considered values of $p$ because at least one averaged eigenvalue is always located well within the instability region. Depending on the number of eigenvalues in the instability region, different types of non-uniform steady states can be observed. For the Erd{\H{o}}s--R{\'e}nyi graph, cross-diffusion induced instability is most likely to appear for small values of the probability $p$ of choosing an edge, while it is not likely to occur for larger values. In order to better compare this structure with the other types, we consider $p\approx4K/(N-1)$, which for large $N$ approximates the average number of links for one node. With $N=100$ and $K=30$, we obtain $p\approx 0.3$, for which the systems display cross-diffusion induced instability with a probability well bounded away from zero. Finally, the Barab\'asi--Albert graph is quite different from the others. The spectrum has a large variance, and especially for large values of $K$ (that in this case represents the number of edges chosen for every new node in the growing process). We observe that the possibility of pattern formation really depends much more on the particular realization of the graph in comparison to the other graph structures.

A final observation has to be made on the non-uniform steady state that arises through cross-diffusion induced instability. Its shape is still quite regular and ``smooth'' as reported for regular rings in the previous section, but small variations from this regular configuration appear (see Figure \ref{fig:SW_steady_p}). For the other structures instead, the system tends to a stable non-uniform configuration that however does not present the characteristic shape of valleys and bumps.

\medskip
In addition to the phenomenological discussion, ecological back-interpretation is also important. The first crucial observation is that cross-diffusion enables the coexistence of the species with a non-homogeneous distribution. In fact, without cross-diffusion, the homogeneous distribution of the populations (namely all nodes reach the same steady state) is the only outcome. This fact is indeed important in ecology: homogeneity and synchronization of metacommunities may be harmful to species' survival. Cross-diffusion also affects the populations' abundance as observed in the PDEs model. Looking at the total abundance of the species on the network for the simulations presented in the papers and the parameter set considered, we observe a small decrease in the total abundance of population $u$ and an increase in the total abundance of population $v$ with respect to the homogenous case (without cross-diffusion). For instance, for the ring topology ($N=100,\, K=10$), parameter set as in Table \ref{tab:SKTparam} and $d=0.03$, species $u$ decreases of 1.78\% while species $v$ increases of 12.3\%. This quantitative difference reflects the difference in the order of magnitude of the population sizes at the homogeneous steady state (see for instance Figure \ref{fig:ring_dyn_NK}) and it is a limitation of the particular parameter set. However, the qualitative trend highlights a counter-intuitive effect driven by cross-diffusion, namely that although species $u$ tries to avoid $v$, it is actually $v$ that benefits from this. However, it is worthwhile to say that the total population abundance is not the only factor measuring the advantages/disadvantages of the two competing populations.

\begin{figure}
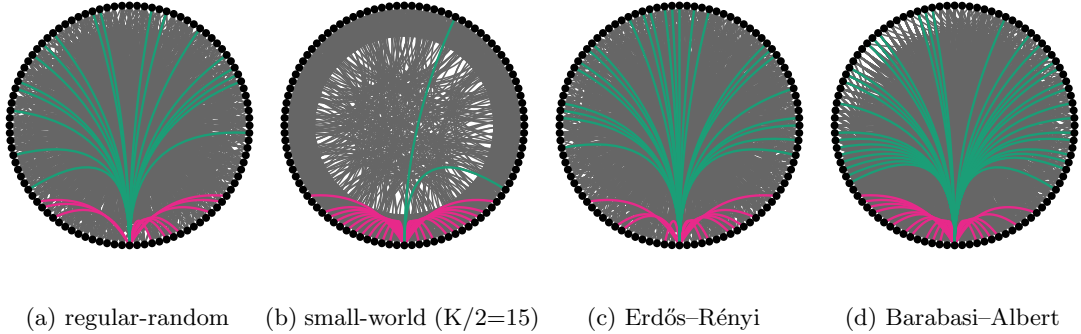

\begin{center}
\subfloat[regular-random\label{RR_network}]{
\begin{overpic}[width=0.2\textwidth, trim=1cm 0cm 0.9cm 0cm, clip]{./Figures/RR_network}
\end{overpic}
}
\subfloat[small-world (K/2=15)\label{SW_network}]{
\begin{overpic}[width=0.2\textwidth, trim=1cm 0cm 0.9cm 0cm, clip]{./Figures/SW_network}
\end{overpic}
}
\subfloat[Erd{\H{o}}s--R{\'e}nyi \label{ER_network}]{
\begin{overpic}[width=0.2\textwidth, trim=1cm 0cm 0.9cm 0cm, clip]{./Figures/ER_network}
\end{overpic}
}
\subfloat[Barabasi--Albert\label{BA_network}]{
\begin{overpic}[width=0.2\textwidth, trim=1cm 0cm 0.9cm 0cm, clip]{./Figures/BA_network}
\end{overpic}
}
\end{center}
\caption{Comparison of different graph topologies (regular-random, small-world,  Erd{\H{o}}s--R{\'e}nyi and Barabasi--Albert). A particular realization of the random graph ($N=100$) is reported, in which the edges between ``neighbors node'' ($K/2=15$ on each side) appear in magenta, while ``long-range interaction'' in green.}\label{fig:graphtopology_structure}
\end{figure}

\begin{figure}
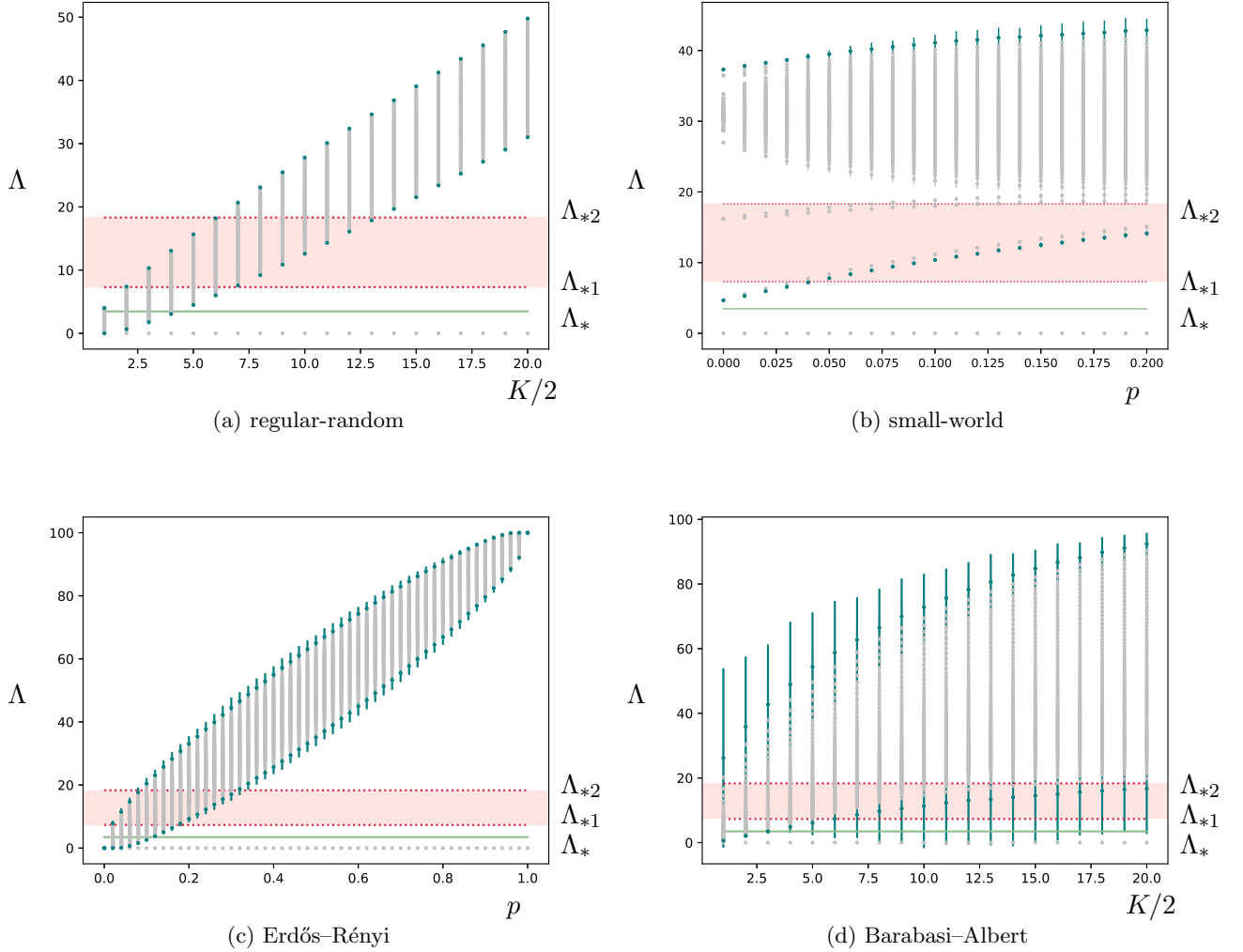

\begin{center}
\subfloat[regular-random\label{RR_spectrum}]{
\begin{overpic}[width=0.5\textwidth]{./Figures/RR_spectrum}
\put(0,35){$\Lambda$}		
\put(83,0){$K/2$}	
\put(92,30){$\Lambda_{*2}$}	
\put(92,12){$\Lambda_{*}$}	
\put(92,18){$\Lambda_{*1}$}
\end{overpic}
}
\subfloat[small-world\label{SW_spectrum}]{
\begin{overpic}[width=0.5\textwidth]{./Figures/SW_spectrum}
\put(0,35){$\Lambda$}		
\put(83,0){$p$}	
\put(92,30){$\Lambda_{*2}$}	
\put(92,12){$\Lambda_{*}$}	
\put(92,18){$\Lambda_{*1}$}
\end{overpic}
}\\
\subfloat[Erd{\H{o}}s--R{\'e}nyi\label{ER_spectrum}]{
\begin{overpic}[width=0.5\textwidth]{./Figures/ER_spectrum}
\put(0,35){$\Lambda$}		
\put(83,0){$p$}	
\put(92,20){$\Lambda_{*2}$}	
\put(92,10){$\Lambda_{*}$}	
\put(92,15){$\Lambda_{*1}$}
\end{overpic}
}
\subfloat[Barabasi--Albert\label{BA_spectrum}]{
\begin{overpic}[width=0.5\textwidth]{./Figures/BA_spectrum}
\put(0,35){$\Lambda$}		
\put(83,0){$K/2$}	
\put(92,20){$\Lambda_{*2}$}	
\put(92,10){$\Lambda_{*}$}	
\put(92,15){$\Lambda_{*1}$}
\end{overpic}
}\\
\end{center}
\caption{Comparison of different graph topologies and the appearance of cross-diffusion induced instability. The mean and variance of the eigenvalues of the graph Laplacian, obtained with 1000 realization of the random graphs are denoted by dots and error bars, respectively; the first nontrivial eigenvalue and the last one are marked in green. The red stripe denotes the instability region $(\Lambda_{*1},\Lambda_{*2})$, while the green solid horizontal line denotes the threshold $\Lambda_{*}$, relevant to the parameter set in Table~\ref{tab:SKTparam} and $d=0.03$.}\label{fig:graphtopology_spectrum}
\end{figure}

\begin{figure}
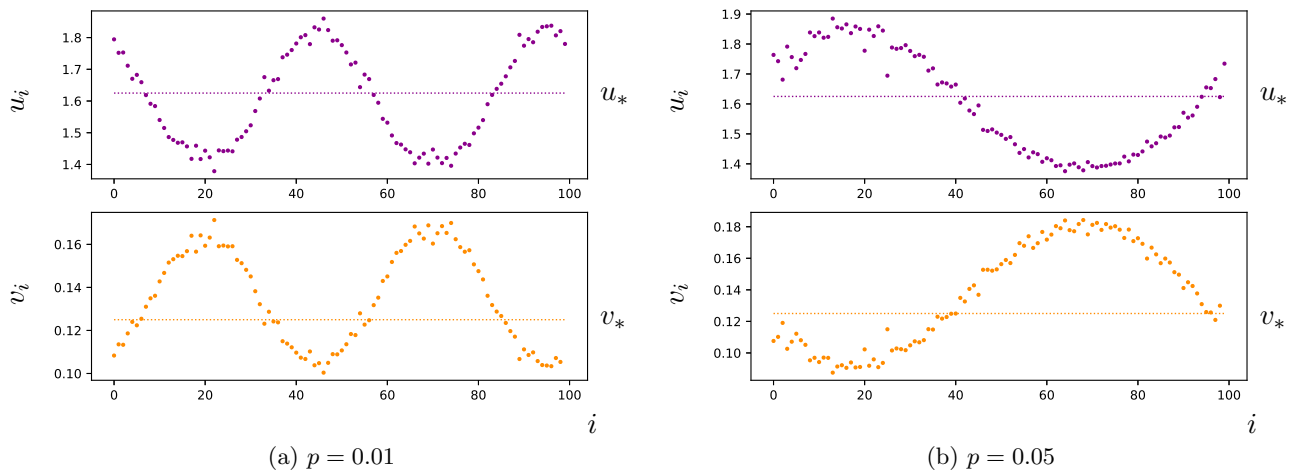

\subfloat[$p=0.01$\label{SW_d03_p01_steady}]{
\begin{overpic}[width=0.5\textwidth]{./Figures/SW_d03_p01_steady}
\put(90,0){$i$}	
\put(0,20){\rotatebox{90}{$v_i$}}
\put(0,50){\rotatebox{90}{$u_i$}}				
\put(92,52){$u_*$}	
\put(92,17){$v_*$}	
\end{overpic}
}
\subfloat[$p=0.05$\label{SW_d03_p05_steady}]{
\begin{overpic}[width=0.5\textwidth]{./Figures/SW_d03_p05_steady}
\put(90,0){$i$}	
\put(0,20){\rotatebox{90}{$v_i$}}
\put(0,50){\rotatebox{90}{$u_i$}}				
\put(92,52){$u_*$}	
\put(92,17){$v_*$}	
\end{overpic}
}
\caption{Non-homogeneous steady state on a small-world network with $N=100$ and $K=30$ for different values of the probability $p$. The parameter set is reported in Table~\ref{tab:SKTparam} and $d=0.03$. Initial conditions are random perturbations of the homogeneous state.}\label{fig:SW_steady_p}
\end{figure}

\section{Conclusion and Outlook} \label{sec:conclusion}
In this paper, we have extended the cross-diffusion induced instability, already known in reaction--cross-diffusion systems of PDEs, to networks of dynamical systems. The non-standard diffusion terms are also written using the graph Laplacian. We established the general framework and we have shown that in this case the linearization slightly differs from the standard case and the PDEs case. Then, we adapted and investigated the SKT cross-diffusion model for competing species on a network. As already known in the context of reaction--cross-diffusion models, the cross-diffusion terms can destabilize the homogeneous equilibrium state and cause the appearance of organized states and patterns, which is not possible only with standard diffusion terms. The important finding is that the obtained conditions for cross-diffusion induced instability in the network framework are the same as the continuous case, where the role of the eigenvalues of the Laplace operator is played by the eigenvalues of the graph Laplacian. Also in this case, an instability region depending on the model parameter characterizes the possibility to observe patterns, and the location of the eigenvalues of the graph Laplacian determines their presence and shape. Finally, we have analyzed different network structures (such as regular rings, 2D lattices and different random graphs) in order to show how the network structure influences the possible outcomes of the system. In particular, we have looked at the spectrum of the graph Laplacian (or its distribution for random graphs). We conclude that cross-diffusion induced instability depends on the network structure: for instance, for the SKT network model in the weak competition case it is more likely to appear on triangular lattices, and on small-world and Barabasi--Albert random graphs.

One key aim of this work is to point out that cross-diffusion terms can be a useful tool to model complex systems, and that they can give rise to richer dynamics. Several research directions arise at this point. On the one hand, we can look at the dynamical and bifurcation aspects of this topic. As widely investigated for cross-diffusion PDEs systems, we want to find an entropy functional or a Lyapunov function for networks in order to achieve global stability results \cite{belykh2004connection, li2010global, slavik2021reaction}. Furthermore, a deeper investigation of the system outcomes combined with the bifurcation structure (that can be computed by the continuation software \texttt{pde2path} \cite{uecker2021pde2path}) may reveal the presence of periodic patterns, as in the PDEs case. On the other hand, one could take an even more detailed look at the influence of the graph topology. In this regard, the parallel between continuous and discrete models is intriguing. Inspired by the discretized version of the PDEs model involving the Laplace operator, we considered in this paper different graph topologies, changing completely the \textit{corresponding} operator in the continuous case. Therefore, it would be interesting to understand the limit model back to the continuous case. Moreover, following \cite{asllani2014theory}, it seems possible to extend the analysis to directed networks. Moreover, considering for instance the SKT model on networks, it can also be possible to consider the dynamics of the two competing species evolving on different networks, as sketched in Figure \ref{fig:multilayer}. This would lead us to consider the dynamics on a multilayer network~\cite{aleta2019multilayer, brechtel2018master, kivela2014multilayer}, where the two layers can be different, for which the extension of the theory of Turing patterns has already been presented in~\cite{asllani2014turing, brechtel2018master,kouvaris2015pattern}. It could also be interesting to study other types of cross-diffusion terms involving different nodes: for instance, in the SKT model, the movement of one species from node $i$ to node $j$ is influenced by the presence of the same species or the competing one on node $j$. In this scenario, there is an information flow or knowledge about the status of the other nodes, or it can be seen as a weighted network with link weights depending on the quantities on each node. Finally, one could study possible applications of cross-diffusion systems on networks, ranging from ecology and landscape modelling to disease spreading~\cite{duan2019turing, lang2018analytic}, that are characterized by a strong interplay between dynamics and network structure.

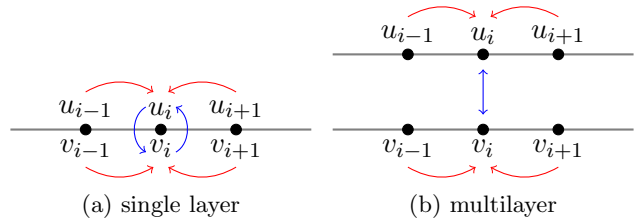
\begin{figure}
\begin{center}
\subfloat[single layer\label{1layernetwork}]{
\begin{tikzpicture}
\draw[gray, thick] (-2,0) -- (2,0);
\filldraw[black] (-1,0) circle (2pt);
\filldraw[black] (0,0) circle (2pt); 
\filldraw[black] (1,0) circle (2pt);
\draw[anchor=north](0,0) node {$v_i$};
\draw[anchor=south](0,0) node {$u_i$};
\draw[anchor=north](-1,0) node {$v_{i-1}$};
\draw[anchor=south](-1,0) node {$u_{i-1}$};
\draw[anchor=north](1,0) node {$v_{i+1}$};
\draw[anchor=south](1,0) node {$u_{i+1}$};

\draw [->,red] (-1,0.5) to [out=30,in=150] (-0.1,0.5);
\draw [->,red] (1,0.5) to [out=150,in=30] (0.1,0.5);

\draw [->,red] (-1,-0.5) to [out=330,in=210] (-0.1,-0.5);
\draw [->,red,bend right] (1,-0.5) to [out=30,in=150] (0.1,-0.5);

\draw [->,blue] (-0.2,0.3) to [out=210,in=150] (-0.2,-0.3);
\draw [->,blue] (0.2,-0.3) to [out=30,in=330] (0.2,0.3);
\end{tikzpicture}
}
\subfloat[multilayer\label{multilayer}]{
\begin{tikzpicture}
\draw[gray, thick] (-2,0) -- (2,0);
\filldraw[black] (-1,0) circle (2pt);
\filldraw[black] (0,0) circle (2pt); 
\filldraw[black] (1,0) circle (2pt);

\draw[gray, thick] (-2,1) -- (2,1);
\filldraw[black] (-1,1) circle (2pt);
\filldraw[black] (0,1) circle (2pt); 
\filldraw[black] (1,1) circle (2pt);

\draw[anchor=north](0,0) node {$v_i$};
\draw[anchor=south](0,1) node {$u_i$};
\draw[anchor=north](-1,0) node {$v_{i-1}$};
\draw[anchor=south](-1,1) node {$u_{i-1}$};
\draw[anchor=north](1,0) node {$v_{i+1}$};
\draw[anchor=south](1,1) node {$u_{i+1}$};

\draw [->,red] (-1,1.5) to [out=30,in=150] (-0.1,1.5);
\draw [->,red] (1,1.5) to [out=150,in=30] (0.1,1.5);

\draw [->,red] (-1,-0.5) to [out=330,in=210] (-0.1,-0.5);
\draw [->,red,bend right] (1,-0.5) to [out=30,in=150] (0.1,-0.5);

\draw [<->,blue] (0,0.2) -- (0,0.8);
\end{tikzpicture}
}
\end{center}
\caption{Different way to model the dynamics of two species on a network: the species evolve, compete and move \protect\subref{1layernetwork} on the same network, or \protect\subref{multilayer} on a multilayer network.\label{fig:multilayer}}
\end{figure}
\FloatBarrier

\medskip
\textbf{Acknowledgements:} The authors thank Esther Daus for the fruitful discussion about the topic of the paper. CK has been supported by a Lichtenberg Professorship of the VolkswagenStiftung. CK also acknowledges partial support of the EU within the TiPES project funded by the European Unions Horizon 2020 research and innovation programme under Grant Agreement No. 820970. CS has received funding from the European Union's Horizon 2020 research and innovation programme under the Marie Sk\l odowska--Curie Grant Agreement No. 754462. CS also acknowledge the support of IGGSE within the SEND project under Project Teams (14th Cohort) and of the University of Graz through the NAWI Graz Visiting Award 2022 IMSC Young Investigator Fellowship. Support by INdAM-GNFM is gratefully acknowledged by CS. 
\bibliographystyle{plainurl}
\bibliography{bibliography}

\appendix
\section{Discretized SKT reaction--diffusion model}\label{A:discrSKT}
The extension of cross-diffusion systems on networks comes from the discretization of reaction--cross-diffusion systems by finite differences. In particular, we consider the SKT model with self- and cross-diffusion terms given by
\begin{equation}
\begin{cases}
\partial_t u=\Delta((d_1+d_{11} u +d_{12} v)u)+(r_1-a_1 u-b_1 v)u,& \textnormal{on } \mathbb{R}_+\times \Omega,\\[0.1cm]
\partial_t v=\Delta((d_2+d_{22} v +d_{21} u)v)+(r_2-b_2 u-a_2 v)v,& \textnormal{on }  \mathbb{R}_+\times \Omega,\\[0.1cm]
\dfrac{\partial u}{\partial n}=\dfrac{\partial v}{\partial n}=0,&\textnormal{on } \mathbb{R}_+\times \partial\Omega,\\[0.1cm]
u(0,x)=u_{in}(x),\; v(0,x)=v_{in}(x),& \textnormal{on } \Omega,
\end{cases}\label{cross}
\end{equation}
where the quantities $u(t,x),\; v(t,x)\geq 0$ represent the population densities of two species at time $t$ and position $x$, confined and competing for resources on a bounded and connected domain $\Omega~\subset~\mathbb{R}^n$. The parameters are the same as the network model presented in Section \ref{netSKT}.

The discretized SKT model on a 1D domain of length $\ell$ using $N$ nodes and uniform mesh size $h=\ell/(N-1)$ is~\cite{daus2019entropic}
\begin{equation}
\begin{cases}
\dot{u}_i=f(u_i,v_i)
+\dfrac{D_1}{h^2}(u_{i-1}-2u_i+u_{i+1})
+\dfrac{D_{11}}{h^2}(u_{i-1}^2-2u_i^2+u_{i+1}^2)
+\dfrac{D_{12}}{h^2}(u_{i-1}v_{i-1}-2u_iv_i+u_{i+1}v_{i+1}),\\[0.3cm]
\dot{v}_i=g(u_i,v_i)
+\dfrac{D_2}{h^2}(v_{i-1}-2v_i+v_{i+1})
+\dfrac{D_{22}}{h^2}(v_{i-1}^2-2v_i^2+v_{i+1}^2)
+\dfrac{D_{21}}{h^2}(u_{i-1}v_{i-1}-2u_iv_i+u_{i+1}v_{i+1}),\\[0.3cm]
i=1,\dots,N.
\end{cases}
\end{equation}
The uniform mesh of $N$ nodes can be seen as a 1D lattice of $N$ nodes, each one connected to its two nearest neighbors (where the boundary conditions determine the edges of the boundary nodes). Indicating with $l_{ij}, \,i,j=1,\dots,N$ are the elements of the graph Laplacian, the discretized system can be written in the form
\begin{equation}
\begin{cases}
\dot{u}_i=f(u_i,v_i)
-\dfrac{D_1}{h^2}\displaystyle{ \sum_{j=1}^N l_{ij}u_j
-\dfrac{D_{11}}{h^2}\sum_{j=1}^N l_{ij}u_j^2
-\dfrac{D_{12}}{h^2}\sum_{j=1}^N l_{ij}v_ju_j},\\[0.3cm]
\dot{v}_i=g(u_i,v_i)
-\dfrac{D_2}{h^2} \displaystyle{\sum_{j=1}^N l_{ij}v_j
-\dfrac{D_{22}}{h^2}\sum_{j=1}^N l_{ij}v_j^2
-\dfrac{D_{21}}{h^2}\sum_{j=1}^N l_{ij}u_jv_j},\\[0.3cm]
i=1,\dots,N.
\end{cases}
\end{equation}
Note that the graph Laplacian $L\in\mathbb{R}^{N\times N}$ is the usual tridiagonal matrix 
\begin{equation}
L=
\begin{pmatrix}
1 &  -1       &           &          &    \\
-1 & 2       & -1        &          &      \\
    & \ddots & \ddots &\ddots &     \\
    &           & -1        & 2      & -1 \\
    &           &           & -1       & 1\\
\end{pmatrix}
\end{equation}
with eigenvalues given by the closed formula~\cite{yueh2005eigenvalues}
$$\Lambda_j=2-2\cos{\left(\dfrac{\pi(j-1)}{N}\right)},\qquad j=1,\dots,N.$$
The main difference with the network model \eqref{eq:sysnet} is the factor $1/h^2$ in front of the diffusive terms coming from the finite difference scheme. Roughly speaking, it allows the eigenvalues of the graph Laplacian to approximate the first $N$ eigenvalues of the Laplace operator when $h$ becomes small (namely increasing the number of mesh points/nodes).
\end{document}